\author{Carlos Curr\'{a}s-Bosch  \footnote{partially supported by  DGICYT: MTM 2006-04353.} }
\begin{document}

\title {\bf   Singular cotangent model}

\maketitle\frenchspacing

\newtheorem{Nota}{ Notation}[section]

\newtheorem{defi}{ Definition}[section]
\newtheorem{lemma}{ Lemma}[section]
\newtheorem{proposition}{Proposition}[section]

\newtheorem{claim}{ Claim}[section]
\newtheorem{remark} {Remark} [section]
\newtheorem{obs}{ Observation}[section]
\newtheorem{cor} { Corollary}[section]
\newtheorem{Th} {Theorem}[section]
\newenvironment{proof}{\noindent {\bf Proof:}}{\quad \hfill $\square$}
\begin{abstract}
Any singular level of a completely integrable system (c.i.s.) with non-degenerate singularities has a singular affine structure. We shall show how to construct a simple c.i.s. around the level, having the above affine structure. The cotangent bundle of the desingularised level is used to perform the construction, and the c.i.s. obtained looks like the simplest one associated to the affine structure.\par
This method of construction is used to provide several examples of c.i.s. with different kinds of non-degenerate singularities.

\end{abstract}

\vskip 5mm

\noindent {\small{\it Subject Classification}: 53D05, 37J35,37G05
\newline
{\it Keywords}: Symplectic manifold; Singular Lagrangian
foliation;
 Integrable \newline
Hamiltonian system; Affine structure on a level; Normal forms}

    \section{ Introduction}
    \hspace{5mm}

Let $(M^{2n},\omega, F)$ be a non-degenerate integrable system.
This means that $(M^{2n},\omega)$ is a symplectic manifold and
$F=(f_{1},...,f_{n})$ is a proper moment map, which is
non-singular almost everywhere, and its singularities are of
Morse-Bott type.

 \vskip 2mm

 The $\Bbb R^{n}$-action generated by the
Hamiltonian vector fields $H_{f_{1}},..., H_{f_{n}}$ gives a
singular Lagrangian foliation on $(M^{2n},\omega)$. Any leaf is an
orbit of this actions. At the same time, any connected component
of $F^{-1}(c)$ is a level. We know that the regular levels are
$n$-dimensional tori and the singular levels are finite union of
several leaves.

\vskip 2mm

A semilocal classification of such integrable systems is still
open. It consists in  finding a complete system of invariants
describing symplectically  a neighborhood of a level. Some
approaches to solve this question has been currently made, see
\cite {BM}, \cite {Mir}, \cite {MiZ}, \cite {Mo}. As the local
description of non-degenerate singularities is given in terms of
products of elliptic, hyperbolic and focus-focus components, the
number of elliptic, hyperbolic and focus-focus components at each
point of the level will play an essential role in the
classification problem. \vskip 2mm

As in the regular case, the Hamiltonian vector fields
$H_{f_{1}},...,H_{f_{n}}$ endow  any leaf and any level with an
affine structure with singularities. In studying the semi-local
classification, we have seen that this affine structure gives strong
conditions on the set of invariants found. Our proposal in this
paper is to prove that this affine structure allows to the
construction of a completely integrable system around a given level
$L_{0}$ of $(M^{2n},\omega, F)$, such that the affine structure on
$L_{0}$ is the given one. This construction looks like the simplest
one with the given affine structure on $L_{0}$. \vskip 2mm As the
\lq \lq 1-jet \rq \rq  of the former completely integrable system
(c.i.s. from now on) and the one constructed in  this way coincide,
it takes sense to denote this c.i.s. as the linearized c.i.s. of the
initial one. \vskip 2mm

The process of construction of the linearized c.i.s. indicates us
a way to construct  c.i.s. (in fact thew will be linearized
completely integrable systems) with prescribed non-degenerate
singularities along a given  singular  level. Some constructions
are given.

\vskip 2mm
I would like to express my gratitude to Pierre Molino for interesting and fruitful conversations on this subject.

\vskip 15mm

 \section{Definition and basic properties.}
     \subsection{Local expressions} \label{subsection2.1}  Let
    $(M^{2n},\omega,f_{1},...,f_{n})$ be an integrable system,
    i.e., $(M^{2n},\omega)$ is a symplectic manifold. The
    functions $f_{1},...,f_{n}$ are Poisson commuting (first
    integrals of a given Hamiltonian system), such that
    $df_{1}\wedge \cdot \cdot \cdot \wedge df_{n}\ne 0$ on a dense
    subset of $M^{2n}$, and the moment map $F:M^{2n}
    \longrightarrow \Bbb R^{n}$, $F=(f_{1},...,f_{n})$, is proper.
    \vskip 2mm
    Such an integrable system is said to be non-degenerate
     if, in a neighborhood of each point $p_{0}\in M^{2n}$, there
    exist canonical coordinates  $(x_{1},y_{1},...,x_{n},y_{n})$, and $n$
    local functions $h_{1},...,h_{n}$, which have one of the
    following expressions:
    $$ h_{i}=y_{i} \qquad \text{(regular terms)}$$
    $$  h_{i}=(x_{i})^{2}+(y_{i})^{2} \qquad \text{(elliptic
    terms)} $$
    $$ h_{i}=x_{i}y_{i} \qquad \text{(hyperbolic terms)}$$
    $$ \begin{cases} h_{i}=x_{i}y_{i}+x_{i+1}y_{i+1} \\
    h_{i+1}=x_{i}y_{i+1}-y_{i}x_{i+1} \qquad \text{(focus-focus
    terms)} \end{cases} $$
    \noindent such that: \begin{enumerate} \item $f_{1},...,f_{n}$
    Poisson commute with $h_{1},...,h_{n}$.
    \item $\{j_{p_{0}}^{2}f_{1},...,j_{p_{0}}^{2}f_{n} \} $ and
    $\{j_{p_{0}}^{2}h_{1},...,j_{p_{0}}^{2}h_{n} \}$ generate the
    same space of $2$-jets at $p_{0}$. \end{enumerate}
    \vskip 5mm
     This notion of non-degeneracy implies obvious conditions on
     the space of $2$-jets generated at each point by
     $f_{1},...,f_{n}$. Conversely, assuming these infinitesimal
     conditions, the existence of such adapted coordinates is an
     important result, due to H.Eliasson \cite{E}. The demonstration
     has been completed by E.Miranda \cite{Mir} and E.Miranda \& V.N.San \cite{MirSan}
     . Adapted coordinates will be referred to as Eliasson coordinates, or
     simply E-coordinates.

\vskip 2mm

 Let $\cal {L}$ be the Lagrangian singular foliation
associated to this c.i.s., i.e. the leaves of $\cal{L}$ are the
orbits of the $\Bbb R^{n}$-action generated by
$H_{f_{1}},...,H_{f_{n}}$. \vskip 2mm

From a practical point of view, one can locally regard at
$(M^{2n},\omega, \cal {L})$ as  $(\Bbb
R^{2n},\omega_{0},{\cal{L}}_{0})$, where $\omega_{0}$ is the
standard symplectic two-form on $\Bbb R^{2n}$ and ${\cal {L}}_{0}$
is given by $dh_{i}=0$, $i=1,...,n.$

\

 Let $L_{0}$ be a singular level, and $p_{0}$ a point of $L_{0}$.
    By taking E-coordinates around $p_{0}$ we have the following
    characteristic numbers of the point $p_{0}$: the numbers $k_{e}$, $k_{h}$ and  $k_{f}$
     correspond to the number of elliptic, hyperbolic and focus-focus terms
    of the set $(h_{1},...,h_{n})$. The leaf through $p_{0}$ is
    $\Bbb T^{c}\times \Bbb R^{o}$, and the numbers $c$ and $o$ are
    called the degrees of closedness and openness of the leaf,
    respectively.
    \vskip 2mm

    Following Zung (see \cite{Z2}), the $5$-tuple
    $(k_{e},k_{h},k_{f},c,o)$ is called the leaf-type and
    $(k_{e},k_{h},k_{f})$ the Williamson type of $p_{0}$. In
    general one has $k_{e}+k_{h}+2k_{f}+c+o=n$.
    In \cite{Z2} it is proved that the three numbers $k_{e},
    k_{f}+c, k_{h}+k_{k}+o$, are invariants of each level. These
    numbers are known as the degrees of ellipticity, closedness
    and openness of the level.

    \vskip 5mm
     Summarizing, one can say that a singular level $L_{0}$  is a
     compact  $(n-k_{e})$-manifold, with self-intersections
     provided with a manifold structure of dimension less than
     $(n-k_{e})$. This level is
     endowed with a non-degenerate $\Bbb R^{n-k_{e}}$-action.
     Non-degenerate action means that the isotropy of this action
     at each point is linearizable.

\vskip 5mm

%Cal justificar pq considerarem d'ara en endavant que $k_{e}=0$.
A we have proved (see \cite{Cu}) that a completely integrable
system, wit $k_{e}\ne 0$, is equivalent to a product of a standard
elliptic model (with degree of ellipticity equal to $k_{e}$) with
a c.i.s. with zero ellipticity degree, we will restrict our
attention along this paper to the case $k_{e}=0$.

\vskip 20mm

\subsection{Desingularized level}

We assume that on the singular level $L_{0}$, with singular affine
structure $\nabla _{0}$,  the degree of ellipticity vanishes. In
order to give a construction of a \lq \lq standard \rq \rq c.i.s.,
such that $L_{0}$ is a singular level, and the induced singular
affine structure on it coincides with $\nabla _{0}$, we start by
giving the construction of the  so called desingularized level:
\vskip 2mm As $k_{e}=0$ on $L_{0}$,  $\text{dim.} L_{0}=n$. Let
$\psi$ be a differentiable embedding of  $M^{2n}$ in an euclidean
space $\Bbb R^{l}$. Let $L_{0}^{r}\subset L_{0}$  be the subset of
regular points of $L_{0}$, i.e., a point $p\in L_{0}$ lies in
$L_{0}^{r}$ if and only if the rank of $dF$ is equal to $n$ at
$p$. Note that $L_{0}^{r}$ is not, in general, a connected
submanifold. \vskip 2mm We consider the following embedding of
$L_{0}^{r}$
$$
\varphi :L_{0}^{r} \hookrightarrow M^{2n}\times G_{n}(\Bbb R^{l})
\quad , \quad p \mapsto (p,[T_{\psi (p)}(\psi (L_{0}))]),
$$
\noindent where $G_{n}(\Bbb R^{l})$ is the $n$ dimensional Grassmann
manifold of $\Bbb R^{l}$.

\vskip 5mm
 \noindent We define the desingularized level,
${\hat{L}}_{0}$ as the closure of $\varphi (L_{0}^{r})$.

\vskip 10mm

\begin{claim} \label{Claim 2.1}$ {\hat{ L}}_{0}$ is a $n$-dimensional submanifold
of $M^{2n}\times G_{n}(\Bbb R^{l})$
\end{claim}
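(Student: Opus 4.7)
The plan is to treat $\hat{L}_0$ over the regular stratum $L_0^r$ and over the singular stratum $L_0\setminus L_0^r$ separately, and then to verify that the two pieces glue into a smooth $n$-dimensional submanifold of $M^{2n}\times G_n(\Bbb R^l)$.

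Over the regular part the statement is essentially formal. Since $L_0^r$ is an $n$-dimensional (possibly disconnected) embedded submanifold of $M^{2n}$, its image $\psi(L_0^r)\subset \Bbb R^l$ carries a smooth Gauss map $p\mapsto [T_{\psi(p)}\psi(L_0)]$. Hence $\varphi$ is a smooth graph embedding of $L_0^r$, and $\varphi(L_0^r)$ is already a smooth $n$-dimensional submanifold of $M^{2n}\times G_n(\Bbb R^l)$.

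The substance of the claim therefore concerns the points of $\hat{L}_0$ lying above singular points. Fix $p_0\in L_0\setminus L_0^r$ and choose Eliasson coordinates centred at $p_0$; since $k_e=0$ only hyperbolic and focus-focus blocks appear in the local normal form. Each hyperbolic block $h_i=x_iy_i$ decomposes its zero locus into the two smooth branches $\{x_i=0\}$ and $\{y_i=0\}$. A direct computation (rewriting $(h_i,h_{i+1})$ as a single complex equation of the form $zw=0$ by means of $z=x_i+\sqrt{-1}\,x_{i+1}$ and a suitable $w$ built from $y_i,y_{i+1}$) shows that each focus-focus block decomposes its zero locus into the two smooth branches $\{x_i=x_{i+1}=0\}$ and $\{y_i=y_{i+1}=0\}$. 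Taking products over the blocks, $L_0$ is locally the union of $2^{k_h+k_f}$ smooth $n$-dimensional branches, each of which meets $L_0^r$ in an open dense subset.

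The decisive point is that the tangent $n$-planes to these branches at $p_0$ are, in Eliasson coordinates, pairwise distinct coordinate subspaces of $T_{p_0}M^{2n}$, and hence have pairwise distinct images in $G_n(\Bbb R^l)$ under $d\psi_{p_0}$. Consequently, as a sequence of regular points approaches $p_0$ along a fixed branch its $\varphi$-image converges to $(p_0,[\text{tangent plane of that branch}])$, while sequences from different branches produce distinct limits. It follows that $\hat{L}_0$ contains exactly $2^{k_h+k_f}$ points above $p_0$, and near each such point it coincides with the graph of the Gauss map of the corresponding branch, which is a smooth $n$-submanifold. Gluing these local descriptions with $\varphi(L_0^r)$ produces the desired global $n$-dimensional submanifold. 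The main obstacle I anticipate is precisely this separation statement in the Grassmannian factor; the hypothesis $k_e=0$ is used essentially, since elliptic blocks would contribute branches of strictly lower dimension and spoil the clean graph-of-Gauss-map description.
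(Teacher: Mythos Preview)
Your proposal is correct and follows essentially the same approach as the paper: split into regular and singular points, use Eliasson coordinates at singular points to decompose $L_0$ locally into $2^{k_h+k_f}$ smooth $n$-dimensional branches, and observe that their tangent $n$-planes at $p_0$ are pairwise distinct so that the Grassmannian factor separates them. The only cosmetic differences are that the paper computes the limiting tangent spaces case by case and writes down explicit chart coordinates (e.g.\ $(x_j,x_{j+1})$ for a focus-focus branch), whereas you package the same information as ``graph of the Gauss map of the branch''; your complex rewriting of the focus-focus block is also just an alternative way to read off the two branches the paper states directly.
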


\begin{proof}
Let $q$ be a point of $\varphi (L_{0}^{r})$. As $\varphi$ is an
embedding, we have a $n$-dimensional natural chart defined around
$q$. \vskip 2mm Let $q=(p,[V])$ be a point in
$\text{Cl}(\varphi(L_{0}^{r})) \setminus \varphi(L_{0}^{r})$.
Obviously $p\in L_{0}\setminus L_{0}^{r}$. We know that there is a
chart $U$ in $M^{2n}$, with coordinates
$(x_{1},y_{1},...,x_{n},y_{n})$, centered at $p$, and $L_{0}\cap
U$ is given by $h_{i}=0$, $i=1,...,n$, where the functions $h_{i}$
are in the form of   section \ref{subsection2.1}. Let us have a look at the
tangent space to $L_{0}$ at a point near to $p$: from
the expressions of $h_{i}$, we see
$$
L_{0}\cap U=\prod _{j=1}^{s} (L^{j}_{0} \cap U^{j}), $$ \noindent
where each $ L_{0}^{j} \cap U^{j}$ is of the form \vskip 2mm
\noindent a) $U^{j}=D_{2}$, with coordinates $(x_{j},y_{j})$,
$h_{j}=x_{j}$, and $L_{0}^{j}\cap U^{j}$ is given by $h_{j}=0$
(regular term). \vskip 2mm \noindent b) $U^{j}=D_{2}$, with
coordinates $(x_{j},y_{j})$, $h_{j}=x_{j}y_{j}$, and
$L_{0}^{j}\cap U^{j}$ is given by $h_{j}=0$ (hyperbolic term).
\vskip 2mm \noindent c) $U_{j}=D_{2}\times D_{2}$, with
coordinates $ (x_{j},y_{j},x_{j+1},y_{j+1})$,
$h_{j}=x_{j}y_{j}+x_{j+1}y_{j+1}$,
$h_{j+1}=x_{j}y_{j+1}-x_{j+1}y_{j}$, and $L_{0}^{j}\cap U^{j}$ is
given by $h_{j}=h_{j+1}=0$ (focus-focus term). \vskip 5mm So, the
tangent space at a point near to $p$ will be: \vskip 5mm \noindent
In the case a) $$[T_{(0,y_{j})}(L_{0}^{j}\cap
U^{j})]=[<\frac{\partial}{\partial y_{j}}>],$$ \noindent so
$$[T_{(0,0)}(L_{0}^{j}\cap U^{j})]=[<\frac{\partial}{\partial
y_{j}}>]$$ \vskip 5mm \noindent in the case b)
$$[T_{(0,y_{j})}(L_{0}^{j}\cap U^{j})]=[<\frac{\partial}{\partial
y_{j}}
>]$$ \noindent or $$[T_{(x_{j},0)}(L_{0}^{j}\cap U^{j})]=[<\frac{\partial }{\partial x_{j}}>],$$ \noindent  so
$[T_{(0,0)}(L_{0}^{j}\cap U^{j})]$  is either
$$[<\frac{\partial}{\partial y_{j}}>] \quad   \text{or } \quad [<\frac{\partial
}{\partial x_{j}}>]$$.

 \vskip 5mm
\noindent and in the case c)
$$[T_{(0,y_{j},0,y_{j+1})}(L_{0}^{j}\cap U^{j})]=[<\frac{\partial}{\partial y_{j}},\frac{\partial}{\partial y_{j+1}}>] ,
$$  $$ \text{or} \quad
[T_{(x_{j},0,x_{j+1},0)}(L_{0}^{j}\cap
U^{j})]=[<\frac{\partial}{\partial x_{j}},\frac{\partial}{\partial
x_{j+1}}
>],$$ \noindent so
$$[T_{(0,0,0,0)}(L_{0}^{j}\cap U^{j})]=[<\frac{\partial}{\partial y_{j}},\frac{\partial}{\partial y_{j+1}}>]
\quad \text {or}$$
$$
[T_{(0,0,0,0)}(L_{0}^{j}\cap U^{j})]=[<\frac{\partial}{\partial
x_{j}}, \frac{\partial}{\partial x_{j+1}}
>].
$$

\vskip 5mm

So, in fact,there are $2^{h+f}$ possibilities for the class of the
grassmannian $[V]$ at $p \in L_{0} \setminus L_{0}^{r}$. \vskip
5mm
 As the structure of the chart around any one of these points will
 be a product structure, it will be sufficient to show the
 differentiability in the hyperbolic and in the focus-focus case.
 Let us do it, for instance, in the focus-focus  cases: we can
 consider that $p=(0,0,0,0)$ and $[V]=[<\frac{\partial}{\partial x_{j}},\frac{\partial}{\partial x_{j+1}}>]$,
 then the coordinates we take around $(p,[V])$ are
 $(x_{j},x_{j+1})$.
 \vskip 2mm
 One can check easily that with these charts, and the previous
 charts around the regular points, we have a structure of
 $C^{\infty}$-manifold on ${\hat {L_{0}}}$.

 \end{proof}

 \vskip 10mm

 The map $j:{\hat{L}}_{0} \longrightarrow M^{2n} \quad , \quad
 (p,[V])\longmapsto p $ is differentiable, $j({\hat{L}}_{0})=L_{0}$,  and the preimage of any
 singular point on $L_{0}$ consists of $2^{h+f}$ points on
$ {\hat{L}}_{0}$. This map is a Lagrangian immersion in
$(M^{2n},\omega)$

\vskip 15mm
\section{The singular cotangent model}

\subsection{Affine structure on the desingularized level}

The Poisson action of the local $F$-basic functions defines a
singular affine structure $\nabla _{0}$ on $L_{0}$. On each leaf
of the level, the affine structure is defined by considering the
infinitesimal generators of the $\Bbb R^{n}$-action as parallel
vector fields. This $\Bbb R^{n}$-action can be lifted in a natural
way to ${\hat{L}}_{0}$. Note that this lift is possible because
the map $j:{\hat{L}}_{0} \longrightarrow M^{2n}$ is an immersion.
As the affine structure on $L_{0}$ is provided by the Hamiltonian
vector fields $H_{f_{1}}\vert_{L_{0}}=X_{1},...,H_{f_{n}}\vert
_{L_{0}}=X_{n}$ it seems natural to write as ${\hat{X_{i}}}$,
$i=1,...,n$, the vector fields on ${\hat {L}}_{0}$ induced through
the immersion $j$, and ${\hat{\nabla}}_{0}$ this affine structure.

\vskip 4mm By using the affine structure on ${\hat {L}}_{0}$ we
are going to construct a completely integrable system on
$(T^{*}{\hat {L}}_{0}, {\hat {\omega}}_{0})$, where ${\hat
{\omega}}_{0}$ is the standard symplectic two-form on $T^{*}{\hat
{L}}_{0}$, and such that the affine structure on   ${\hat
{L}}_{0}$, induced by this c.i.s. is ${\hat {\nabla}} _{0}$.
\vskip 2mm As a last step, by a standard process of gluing, by
using natural local identifications, we will obtain a c.i.s., such
that $L_{0}$ is one level, and  the affine structure on $L_{0}$
will be $\nabla _{0}$.

\vskip 15mm
\subsection{A completely integrable system on
$T^{*}{\hat{L}}_{0}$} \label{subsection 3.2}

We define $n$ differentiable functions $g_{1}$,...,$g_{n}$  on
$T^{*}{\hat{L}}_{0}$ by $$ g_{i}(
(p,[V]),w):=<{\hat{X}}_{i}(p,[V]),w>
$$
Our proposal is to see that
$(T^{*}{\hat{L}}_{0},{\hat{\omega}}_{0},(g_{1},...,g_{n}))$ is
completely integrable. To do it, we need only to prove that $\{
g_{i},g_{j}\} \vert _{0} =0$. By a continuity argument, it  will
be sufficient to prove it at the points $((p,[V]),-)\in {\hat
{L}}_{0}$, where $p$ is a regular point. We can take Eliasson
coordinates $(x_{1},y_{1},...,x_{n},y_{n})$ around the point $p$,
and as any basic function only depends on $(y_{1},...,y_{n})$ in a
neighborhood of $p$, the expression of $H_{f_{i}}$ in this
neighborhood will be of the form $\sum_{j=1}^{n} {\frac {\partial
f_{i}}{\partial y_{j}} }(0,...,0)\frac {\partial}{\partial
x_{j}}$, i.e.  is a vector fields with constant coefficients
(which is obvious because the vector field is affine parallel).
\vskip 2mm Let $\alpha$ be the Liouville form on $T^{*}{\hat
{L}}_{0}$, in this neighborhood $\alpha =\sum _{i=1}^{n}
y_{i}dx_{i}$, $g_{i}=\sum _{k=1}^{n} y_{k} {\frac {\partial
f_{i}}{\partial y_{k}}}(0,...,0)$, so $dg_{i}=\sum_{k=1}^{n}{\frac
{\partial f_{i}}{\partial y_{k}}}(0,....,0) dy_{k}$, and obviously
$$
\Lambda ^{0} (dg_{i},dg_{j})=0.$$ \vskip 5mm

Once we know that
$(T^{*}{\hat{L}}_{0},{\hat{\omega}}_{0},(g_{1},...,g_{n}))$ is
completely integrable, we remark that, in general, is not proper:
let us assume, for instance that $\text{dim.}L_{0}=1$, and let $p$
be an hyperbolic point, we can write $\omega_{0}=dx\wedge dy$, and
$h=xy$, then $x=0$ is a leaf. \vskip 2mm In order to have a proper
completely integrable system around $L_{0}$we have to define a
gluing between points on ${\hat {L}}_{0}$ which are projected, via
$j$, to the same singular point on $L_{0}$. As the singular
composition of each point is, in fact, a product of hyperbolic and
focus-focus components, we need show how this gluing is done in
hyperbolic and focus-focus cases.

\vskip 5mm \noindent {\underbar {Hyperbolic gluing:}}
 Let
$(p,[V_{1}]), (p,[V_{2}])\in {\hat {L}}_{0}$, where $p \in L_{0}$ is
a purely hyperbolic point (degree of hyperbolicity equal to one).
Then in Eliasson coordinates $(x,y)$ around the point $p$, we may
assume $[V_{1}]=[<\frac {\partial }{\partial x}>]$ and
$[V_{2}]=[<\frac{\partial}{\partial y}>]$. \vskip 2mm We take
canonical coordinates in $T^{*}{\hat {L}}_{0}$ in two neighborhoods
$U_{1}$ of $(p,[V_{1}])$,  $U_{2}$ of $(p,[V_{2}])$, and we denote
them by $(x,y)$ and $(X,Y)$ respectively. Any basic function in
these neighborhoods depends on $xy$ and $XY$ respectively, and the
canonical symplectic two form is $dx\wedge dy$ and $dX\wedge dY$
respectively. \vskip 2mm The symplectomorphism from $U_{1}$ onto
$U_{2}$, we are searching for, is  expressed in these  coordinates
by $X=-y, Y=x$.

\vskip 5mm \noindent {\underbar{Focus-focus gluing:}}We can
proceed in a similar way for  two points $(p,[V_{1}])$,
$(p,[V_{2}])$, in the preimage of a focus-focus point $p\in
L_{0}$.  Regarding to Claim \ref{Claim 2.1} one can consider
$$[V_{1}]=[<\frac{\partial }{\partial x_{1}},\frac{\partial }{\partial
x_{2}}>] \quad , \quad [V_{2}]=[<\frac{\partial }{\partial
y_{1}},\frac{\partial }{\partial y_{2}}>].$$ We take canonical
coordinates in $T^{*}{\hat {L}}_{0}$  in two neighborhoods $U_{1}$
of $(p,[V_{1}])$  and  $U_{2}$ of $(p,[V_{2}])$, and we denote
them by $(x_{1},x_{2},y_{1},y_{2})$ and
$(X_{1},X_{2},Y_{1},Y_{2})$ respectively. Any basic function in
these neighborhoods depends on $x_{1}y_{1}+x_{2}y_{2} ,
-y_{1}x_{2}+x_{1}y_{2}$  and $X_{1}Y_{1}+X_{2}Y_{2},
-Y_{1}X_{2}+X_{1}Y_{2}$, respectively. The symplectomorphism from
$U_{1}$ to $U_{2}$ we need is given by: $X_1=-y_{1}$, $X_2=-Y_2$,
$Y_1
=X_1$, $Y_2=-X_2$.

\vskip 15mm

\subsection{The singular cotangent model}
Once we have shown  how  to identify the neighborhoods of  the
points in the same preimage we get a germ of $2n$-dimensional
 symplectic manifold  $(N, \omega _{0})$ containing $L_{0}$ as a  singular Lagrangian
 submanifold. Now we check that the  functions $g_{i}$, $i=1,...,n$, can be
 projected to $N$. To do it, we must see that the functions
 $g_{i}$ remain invariant under the above defined identifications
 in the hyperbolic and the focus-focus cases.
 \vskip 2mm
 In the hyperbolic case, we see that in the above defined
 neighborhood $U_{1}$, any function $g_{i}$ is of the form
 $y\frac {\partial h_{i}}{\partial y}(0)$, and a similar
 expression in $U_{2}$. Having in mind the symplectomorphism
 $Y=-x, X=y$, one sees that $h_{i}$ is preserved.
 \vskip 2mm
 For the focus-focus identification the proof is similar.

 \vskip 5mm

 It is quite obvious from the given construction of $(N, \omega
 _{0},(g_{1},...,g_{n}))$that the singular affine structure on
 $L_{0}$ is the previous one.

\vskip 15mm

\subsection{Construction of c.i.s. with prescribed singularities}

As a sort of application of the above considerations, let us point
out how to give some  c.i.s. with prescribed singularities, around a
singular level. \vskip 2mm The intrinsic geometry of the level, i.e.
the number and kind of singular points, is obviously related with
the kind of singularities along its  singular points. \vskip 5mm

We show how to obtain a c.i.s. around a $2$-dimensional singular
level with a focus-focus point and one circle of hyperbolic points:
\vskip 2mm Let us consider the $2$-sphere $S^{2}$. This manifold
$S^{2}$ will play the role of $\hat{L}_{0}$ of the above sections.
We shall consider $T^{*}S^{2}$ and two vector fields, with
singularities, which will be used to define two functions on
$T^{*}S^{2}$. These functions have singularities at several points,
and  by furnishing the  gluings at the corresponding singular
points, we  will get the c.i.s. around the singular level. \vskip
2mm Let $\theta$ (longitude) and $\varphi$ (latitude) be polar
coordinates on $S^{2}$.The vector fields to consider are:
$X=\frac{\partial}{\partial \theta}$ and $Y=h(\varphi)\frac
{\partial}{\partial \varphi}$. We have to give a \lq \lq good\rq\rq
expression for $h(\varphi)$, obviously we take
$h(-\frac{\pi}{2})=0$, $h(\frac{\pi}{2})=0$, and
$h(-\frac{\pi}{4})=h(\frac{\pi}{4})=0$. We consider four open
subsets of $S^{2}$
$$ U_{1}=\{(\theta,\varphi)\quad \vert \quad -\frac{\pi}{2}\le \varphi
<-\frac{\pi}{2}+\varepsilon \} $$
$$
V_{1}=\{(\theta,\varphi)\quad \vert \quad
-\frac{\pi}{4}-\varepsilon <\varphi < \frac{\pi}{4}+\varepsilon \}
$$
$$V_{2}=\{(\theta,\varphi)\quad \vert \quad \frac{\pi}{4}-\varepsilon <\varphi
<\frac{\pi}{4}+\varepsilon \}
$$
$$
U_{2}=\{ (\theta,\varphi)\quad \vert \quad
\frac{\pi}{2}-\varepsilon <\varphi \le \frac{\pi}{2} \}
$$
\noindent where  $\varepsilon <\frac{\pi}{8}$.

\vskip 5mm On $U_{1}$ and $U_{2}$we can take as coordinates the
first two cartesian coordinates $(x_{1},x_{2})$, and consider the
 vector fields: \vskip 2mm on $U_{1}$, $x_{1}\frac
{\partial}{\partial x_{1}}+x_{2}\frac{\partial}{\partial
x_{2}}=-\frac{\cos \varphi}{\sin \varphi}\cdot
\frac{\partial}{\partial \varphi}$ and
 $-x_{2}\frac{\partial}{\partial
x_{1}}+x_{1}\frac{\partial}{\partial
x_{2}}=\frac{\partial}{\partial \theta}$. \vskip 4mm

on $U_{2}$, $-x_{1}\frac {\partial}{\partial
x_{1}}-x_{2}\frac{\partial}{\partial x_{2}}=\frac{\cos
\varphi}{\sin \varphi}\cdot \frac{\partial}{\partial \varphi}$ and
$-x_{2}\frac{\partial}{\partial
x_{1}}+x_{1}\frac{\partial}{\partial
x_{2}}=\frac{\partial}{\partial \theta}$.

\vskip 5mm On $V_{1}$ and $V_{2}$ we can take $(\theta , \varphi)$
as coordinates and the following vector fields: \vskip 2mm
 on $V_{1}$, $-(\varphi
+\frac{\pi}{4})\frac{\partial}{\partial \varphi}$ and
$\frac{\partial}{\partial \theta}$. \vskip 2mm  on
$V_{2}$, $(\varphi-\frac{\pi}{4})\frac{\partial}{\partial
\varphi}$ and $\frac{\partial}{\partial \theta}$. \vskip 5mm Now
we see that the function $h(\varphi)$ we are searching for  can be
a differentiable function of $\varphi$, such that its values in
$U_{1},V_{1},U_{2},V_{2}$ are the above prescribed and not
vanishing on $S^{2}\setminus (U_{1}\cup V_{1}\cup U_{2} \cup
V_{2})$. \vskip 10mm

Finally, we consider on $T^{*}S^{2}$ the pair of functions $f, g$
associated to the vector fields $X,Y$, defined as follows: for any
point $(z,w)\in T^{*}S^{2}$, $f(z,w):=<X(z),w>$,
$g(z,w):=<Y(z),w>$. The c.i.s. we are searching for is obtained by
a process of gluing from $(T^{*}S^{2},\omega _{0},(f,g))$. This
gluing can be easily established by defining two
symplectomorphisms: one of them is a local symplectomorphism
between $(T^{*}U_{1}, (x_{1}=0,x_{2}=0,0,0))$ and
$(T^{*}U_{2},(X_{1}=0,X_{2}=0,0,0))$. We recall (see hyperbolic
gluing in section\ref{subsection 3.2}) that the mapping is given
by
$$
(x_{1},x_{2},y_{1},y_{2})\longmapsto
(X_{1}=-y_{1},X_{2}=-y_{2},Y_{1}=x_{1},Y_{2}=x_{2}).$$

\vskip 2mm The other symplectomorphism we need to conclude the
gluing is a semi-local symplectomorphism from
$(T^{*}V_{1},(\theta,\varphi=-\frac{\pi}{4},0,0))$ in
$T^{*}V_{2},(\theta,\varphi=\frac{\pi}{4},0,0))$, described as
follows: on $T^{*}V_{1}$ we take $(\theta,
\varphi +\frac{\pi}{4}, \Theta,\Phi)$ as canonical coordinates. In
the same form, we take $({\bar{\theta}}=\theta,{\bar{\varphi}}
-\frac{\pi}{4}=\varphi-\frac{\pi}{4},{\bar{\Theta}}=\Theta,{\bar{\Phi}})$
as canonical coordinates in $T^{*}V_{2}$. The symplectomorphism we
need to define the gluing is
$$
(\theta,\varphi +\frac{\pi}{4},\Theta,\Phi) \longmapsto
({\bar{\theta}}=\theta,{\bar{\varphi}}
-\frac{\pi}{4}=\Phi,{\bar{\Theta}}=\Theta,{\bar{\Phi}}=-(\varphi +
\frac{\pi}{4})).
$$

\vskip 10mm

Thus, the quotient of a germ of neighborhood of $S^{2}$ in
$T^{*}S^{2}$, by using these identifications provides us a germ of
completely integrable system around a level having one singular
point of focus-focus type and one circle of hyperbolic points.

\vskip 5mm One sees from this construction that the same arguments
can serve to give c.i.s. with several points of focus-focus type;
it should be necessary to use different copies of $S^{2}$ and
define a gluing by using the poles alternatively. The above
construction suggests different ways of having circles of
hyperbolic points in the level.

    \vskip 40mm

\vskip 40mm
 Departament d'\`{A}lgebra i Geometria, Universitat de
Barcelona (Spain)\par E-mail address:carloscurrasbosch@ub.edu

\end{document}